\documentclass[a4paper,12pt]{amsart} 

\usepackage[T1]{fontenc}
\usepackage[utf8]{inputenc}

\usepackage{bookmark}
\usepackage{hyperref} 

\usepackage[initials]{amsrefs}

\usepackage[%
	left=2.5cm,       
	right=2.5cm,      
	top=3.5cm,        
	bottom=3.5cm,     
	heightrounded,    
	bindingoffset=0mm 
]{geometry}

\numberwithin{equation}{section}
\newtheorem{thm}{Theorem}
\newtheorem{lem}[thm]{Lemma}
\theoremstyle{definition}
\newtheorem{rem}{Remark}

\begin{document}

\title[Nowhere bounded functions]{Sobolev subspaces  of nowhere bounded functions}

\author[P. D. Lamberti]{Pier Domenico Lamberti}
\author[G. Stefani]{Giorgio Stefani}
\email{lamberti@math.unipd.it, giorgio.stefani.2@studenti.unipd.it}
\address{Dipartimento di Matematica, Universit\`{a}  degli Studi di Padova, Via Trieste 63, 35121 Padova, Italy}


\keywords{Nowhere bounded functions, Sobolev spaces, Sobolev Embedding}

\subjclass[2010]{46E35, 26B05, 26B40}

\thanks{\emph{Acknowledgements}. 
The authors are thankful to an anonymous referee for valuable comments, in particular for considerations of terminological type. 
The authors are  thankful to Professors Richard M. Aron and Juan B. Seoane-Sep\'{u}lveda for  bringing to attention their work  with many references. The authors are members of the Gruppo Nazionale per l'Analisi Matematica, la Probabilit\`{a}  e le loro Applicazioni (GNAMPA) of the Istituto Nazionale di Alta Matematica (INdAM)}

\begin{abstract}
We prove that in any Sobolev space which is subcritical with respect to the Sobolev Embedding Theorem there exists a closed infinite dimensional linear subspace whose non zero elements are nowhere bounded functions.  We also prove the existence of a closed infinite dimensional linear subspace whose non zero elements are nowhere $L^q$ functions for suitable values of $q$ larger than the Sobolev exponent. 
\end{abstract}

\maketitle

\section{Introduction}

Given $l\in {\mathbb{N}}$, $p\in [1, \infty ]$ and an open set $\Omega$ in ${\mathbb{R}}^N$, the Sobolev space $W^{l,p}(\Omega)$ is defined as the space of those real valued functions in $L^p(\Omega)$ with distributional derivatives in $L^p(\Omega)$  up to order $l$, endowed with the  norm defined by $\| v\|_{W^{l,p}(\Omega)}=\sum_{0\le |\alpha |\le l}\| D^{\alpha }v\|_{L^p(\Omega)}$ for all $v\in W^{l,p}(\Omega)$.

Sobolev spaces play a prominent role in modern Mathematical Analysis and applications to partial differential equations. In particular, they provide a natural setting for the study of fundamental problems from Mathematical Physics, in which case the so-called energy spaces are often identified with suitable closed subspaces  of $W^{l,2}(\Omega )$ containing $C^{\infty }_c(\Omega)$, see, e.g.,~\cites{helffer, necas}.

One of the main features of Sobolev spaces is their completeness.  In fact, for $p<\infty$, they can be defined as the completion of the space of smooth functions with respect to  the norm above, which clearly allows Sobolev spaces to posses badly behaved functions.

Nevertheless, the celebrated Sobolev Embedding Theorem  gives sharp information on the intrinsic regularity of the functions in Sobolev spaces, see~\cite{burenkov}*{Chapter~4} for instance. In particular, such theorem states that if $\Omega$ is a sufficiently regular open set (say, $\Omega$ satisfies the cone condition) and $pl>N$ for $p\ne 1$, or $pl\geq N$ for $p=1$, then $W^{l,p}(\Omega)$ is continuously embedded into $C_b(\Omega)$, where $C_b(\Omega)$ denotes the space of real valued, bounded, continuous functions on $\Omega$ endowed with the usual sup-norm.  

It is well-known that, in the subcritical case $pl<N$, the Sobolev space $W^{l,p}(\Omega)$ contains unbounded functions, as well as functions which are nowhere bounded in $\Omega$. If $\Omega $ is bounded, an example of a nowhere bounded function $v$ in $W^{l,p}(\Omega)$  can be easily provided  by considering a numerable dense subset $\{x_n\}_{n\in \mathbb{N}}$ of $\Omega$ and setting $v(x)=\sum_{n=1}^{\infty }|x-x_n|^{\mu }/ 2^n$ for almost all $x\in \Omega$,  where $\mu\in ]l-N/p ,0[ $  (see also, e.g.,~\cite{evans98}*{Example~4, p.~247}; see Section~\ref{sec:proof_main} below for the general case). 

The aim of the present paper is to study such unbounded functions in the frame of a comparatively new field of investigation devoted to the analysis  of spaces of pathological  functions.  In his  seminal paper  \cite{gurariy91}, Vladimir I. Gurariy  proved the existence of  a closed infinite dimensional linear subspace of  $C([0,1])$  whose non zero elements are nowhere differentiable functions. Following \cite{gurariy91}, a number of authors   have addressed analogous problems concerning that or other counterintuitive properties of functions, see,  e.g., \cites{aronetal05, enfloetal14}. We refer to the recent monograph~\cite{aronetal15} for an extensive discussion of old and new results in this topic as well as for  references. We also refer  to~\cite{dragoetal11} for some historical and pedagogical remarks in the realm of functions with strange properties.

In this paper, we prove that every  Sobolev space $W^{l,p}(\Omega)$ with $pl\le  N$ if $p\ne 1$, or $pl<N$ if $p=1$, contains a closed infinite linear dimensional  subspace whose non zero elements are nowhere bounded functions. Actually, if $pl<N$ with $p\ge 1$, we shall prove even more. Indeed recall that, in the case $pl<N$, the Sobolev Embedding Theorem provides some additional integrability properties for the functions in $W^{l,p}(\Omega)$. More precisely, such  theorem states  that if   $\Omega$ is a sufficiently regular open set as above and  $pl<N$, then $W^{l,p}(\Omega)$ is continuously embedded into $L^{q^*}(\Omega)$, where $q^*=Np/(N-pl)$ is the celebrated Sobolev exponent. It is an exercise to prove that the Sobolev critical exponent $q^*$ cannot be improved, that is, if $pl<N$ and $W^{l,p}(\Omega)$ is continuously embedded into $L^q(\Omega)$ for some $q\in[1,\infty[$, then $q\leq q^*$. We plan to prove that this is true in a stronger way.  

Following~\cite{kaufmann&pellegrini11}, we say that a real valued function $v$ defined on $\Omega$ is nowhere $L^q$ for some $q\in[1,\infty]$ if, for any non empty open subset $U$ of $\Omega$, $v|_U$ is not in $L^q(U)$ (the case $q=\infty $ is exactly the case of nowhere bounded functions mentioned above). 

With this terminology, our main result is the following  
\begin{thm}\label{main} 
Let $\Omega$ be a non empty open set in $\mathbb{R}^N$ and let $l\in\mathbb{N}$, $p\in [1,\infty [$. If $pl<N$ then, for every $r\in]lq^*,\infty]$ fixed, the space  $W^{l,p}(\Omega)$ contains a closed infinite dimensional linear subspace whose non zero elements are  nowhere $L^r$ functions. If $pl=N$ with $p\ne 1$, then the space  $W^{l,p}(\Omega)$ contains  a closed infinite dimensional linear subspace whose non zero elements are nowhere bounded functions.
\end{thm}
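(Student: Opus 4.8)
The plan is to split the statement into two tasks: first, to exhibit a \emph{single} function in $W^{l,p}(\Omega)$ with the required pathology, and then to upgrade this to a closed infinite dimensional subspace. In both regimes the pathology is produced by localized singular blocks centred at the points of a fixed countable dense set $\{x_n\}_{n\in\mathbb N}\subset\Omega$. In the subcritical range $pl<N$ I would take $\psi_n(x)=\eta_n(x)\,|x-x_n|^{\mu}$, where $\eta_n\in C^\infty_c(\Omega)$ equals $1$ near $x_n$ and is supported in a small ball $B(x_n,s_n)\subset\Omega$, and $\mu$ lies in the range $l-N/p<\mu<0$. The lower bound guarantees $\psi_n\in W^{l,p}(\Omega)$ (the worst term $D^\alpha\psi_n$, $|\alpha|=l$, behaves like $|x-x_n|^{\mu-l}$, which is $p$-integrable since $(\mu-l)p>-N$), while the constraint $\mu\le-N/r$ makes $\psi_n\notin L^r(B(x_n,s_n))$. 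Such a $\mu$ exists precisely because $-N/r>l-N/p$, i.e. $r>q^*$; the stronger hypothesis $r>lq^*$ leaves room for the quantitative estimates of the second step. In the critical case $pl=N$ the admissible interval for the power collapses to a point, so I would replace the power by a logarithmic block $\psi_n(x)=\eta_n(x)\,\bigl(\log(1/|x-x_n|)\bigr)^{\alpha}$ with $0<\alpha<1-1/p$: one checks that $D^l\psi_n\sim|x-x_n|^{-l}\bigl(\log(1/|x-x_n|)\bigr)^{\alpha-1}$ is $p$-integrable exactly when $\alpha<1-1/p$, while $\alpha>0$ forces $\psi_n$ to be unbounded at $x_n$. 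This is where $p\ne1$ enters, the interval $]0,1-1/p[$ being nonempty only for $p>1$.

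With these blocks a single pathological function is $w=\sum_n c_n\,\psi_n$ with positive coefficients $c_n$ so small that $\sum_n c_n\|\psi_n\|_{W^{l,p}(\Omega)}<\infty$; the series then converges in $W^{l,p}(\Omega)$ by completeness. Since each $\psi_n$ is compactly supported in $\Omega$, this makes sense for an \emph{arbitrary} open set, for which a single non-truncated power or logarithm need not lie globally in $W^{l,p}$. As $w\ge c_n\psi_n\ge0$ near each $x_n$ and $\{x_n\}$ is dense, $w$ inherits the local pathology of the blocks on a dense set of points, hence is nowhere $L^r$ (resp. nowhere bounded). This shows that the pathological set is nonempty and generalizes the example of the Introduction to arbitrary $\Omega$.

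To pass from one function to a closed infinite dimensional subspace I would fix a countable base $\{U_i\}_{i\in\mathbb N}$ of nonempty open subsets of $\Omega$ and set $A_i=\{v\in W^{l,p}(\Omega):v|_{U_i}\in L^r(U_i)\}$ (with $L^\infty$ in place of $L^r$ when $r=\infty$). Since $\{U_i\}$ is a base, a function fails to be nowhere $L^r$ if and only if it belongs to $\bigcup_i A_i$, so the pathological functions are exactly $W^{l,p}(\Omega)\setminus\bigcup_i A_i$. Each $A_i$ is a proper \emph{operator range}: with the norm $\|v\|_{W^{l,p}(\Omega)}+\|v\|_{L^r(U_i)}$ it is a Banach space continuously injected into $W^{l,p}(\Omega)$, and it is proper by the function $w$ above. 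A spaceability criterion for complements of countable unions of operator ranges (in the spirit of Kitson--Timoney) would then produce a closed infinite dimensional subspace lying, apart from the origin, in $W^{l,p}(\Omega)\setminus\bigcup_i A_i$, which is the assertion. A more hands-on alternative is to partition $\{x_n\}$ into countably many pairwise disjoint subsets $D_k$, each still dense in $\Omega$, and to take the closed span of the functions $v_k=\sum_{n\in D_k}c_n\psi_n$.

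The main obstacle is exactly this last passage. In the direct construction the difficulty is that the singularities are dense, so the supports of the blocks necessarily overlap and \emph{cannot} be made pairwise disjoint; consequently, for a general element $\sum_k a_k v_k$ with signed coefficients one must rule out that the singularity of $\psi_n$ at $x_n$ is cancelled by the infinitely many other blocks whose singular points accumulate at $x_n$. Resolving this calls for a quantitative choice of the $c_n$, decaying fast enough that near each $x_n$ the block $\psi_n$ dominates the remaining series in an integral sense, and it is here that the extra margin of $r>lq^*$ over $r>q^*$ is spent. The abstract route avoids the cancellation problem altogether, transferring the burden onto verifying the hypotheses of the spaceability criterion, namely the completeness and operator-range structure of the $A_i$ and the properness of their union, the latter being secured by the explicit function $w$.
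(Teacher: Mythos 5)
Your first step (the explicit blocks $\psi_n$ and the single pathological function $w=\sum_n c_n\psi_n$, with signs chosen so that no cancellation occurs) is correct and is essentially the paper's own preliminary construction (there, $u=\sum_n 2^{-n}f(\cdot-x_n)$ with $f$ a sign-definite compactly supported singular block). The genuine gap is in the passage to a \emph{closed} infinite dimensional subspace, which is the actual content of the theorem, and neither of your two routes closes it. Route (a): the Kitson--Timoney criterion gives spaceability of $F\setminus R$ where $R$ is the \emph{linear span} of countably many operator ranges, under the hypothesis that $R$ is not closed; it does not apply to the complement of a \emph{union}. And here the span of your sets $A_i$ is all of $W^{l,p}(\Omega)$: pick two basis sets $U_{i_1},U_{i_2}$ with disjoint closures and $\eta\in C^\infty(\mathbb{R}^N)$ with bounded derivatives, $\eta\equiv 0$ near $U_{i_1}$ and $\eta\equiv 1$ near $U_{i_2}$; then any $v$ splits as $v=\eta v+(1-\eta)v\in A_{i_1}+A_{i_2}$, since each summand vanishes on one basis set. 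So the criterion is vacuous in this situation, and no off-the-shelf substitute for unions exists (it cannot hold in full generality: a single closed hyperplane is an operator range with non-spaceable complement). Note also a red flag: if such a criterion were available, your argument would prove the theorem for every $r>q^*$, strictly stronger than the statement; the paper explains that its method is genuinely obstructed below $lq^*$, because the subspace it builds lies in $W^{1,pl}_{loc}\hookrightarrow L^{lq^*}_{loc}$.

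Route (b) you honestly flag as incomplete, and the difficulty is worse than you suggest. First, since all your blocks carry the \emph{same} exponent $\mu$, and necessarily $\mu>l-N/p=-N/q^*$, a Chebyshev-type estimate (using $v\in L^{q^*}_{loc}$ for the competing terms) fails to show that the negative spikes of $-\sum a_kv_k$ cannot absorb the divergence of $\int\psi_n^r$ near $x_n$: the exceptional superlevel sets are too large, so cancellation cannot be excluded by decay of the $c_n$ alone. Second, even if finite signed sums were handled, the closed span contains infinite combinations and their $W^{l,p}$-limits, and ``nowhere $L^r$'' is not stable under $W^{l,p}$ convergence; your proposal never addresses this. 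The paper resolves both problems simultaneously with a mechanism absent from your plan: every element of the pathological family is a transform $g_{bc}\circ u$ of the \emph{same} sign-definite $u$, with $g_{bc}=\sum_n b_nc_n\psi(\cdot)\,|\cdot|^{a_n}$ built from \emph{strictly decreasing} exponents $a_n\in\,]a,1[$, so that for every nonzero $c\in\ell^2$ the leading term dominates, $|g_{bc}(t)|\geq\frac{|b_{\bar n}c_{\bar n}|}{2}|t|^{a}$ for large $|t|$ --- non-cancellation for all (infinitely many) coefficients at once; the map $c\mapsto g_{bc}\circ u$ is compact from $\ell^2$ into $W^{l,p}\cap W^{1,pl}$ (via Runst--Sickel composition estimates, which for $l\geq 2$ force the $W^{1,pl}$ control to avoid the Dahlberg degeneracy, whence $r>lq^*$); and closedness comes for free from the Fredholm alternative applied to $\mathcal{I}-\mathcal{R}\circ\mathcal{T}_u\circ\mathcal{J}$ on a closed subspace of bounded (radial $W^{l,p}_0$ on an annulus) functions, since bounded minus nowhere $L^r$ is nowhere $L^r$. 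To salvage a direct construction you would need an analogue of this leading-exponent domination for arbitrary $\ell^2$ coefficients and some comparable device to guarantee closedness.
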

 
Theorem~\ref{main} is proved in Section~\ref{sec:proof_main} where  the required closed infinite dimensional linear subspace of $W^{l,p}(\Omega)$  is defined as the image of a closed infinite dimensional linear subspace of $W^{l,p}(\Omega)\cap W^{1,pl}(\Omega)$ via a suitable compact perturbation of the identity. Such subspace  turns out to be a closed subspace also of $W^{l,p}(\Omega)\cap W^{1,pl}(\Omega)$, see Remark~\ref{fisiorem}. By the Sobolev Embedding Theorem the space $W^{1,pl}(\Omega)$ is embedded into $L^{lq^*}_{loc}(\Omega)$  and this explains why, in Theorem~\ref{main}, we require that $r$  belongs to $]lq^*, \infty ]$, which is smaller than the interval $]q^*,\infty]$ when  $l\geq 2$. We also note  that our compact operator is defined by means of a suitable composition operator and that the condition $r\in ]lq^*,\infty]$ is needed to make it well-defined when $l\geq 2$.

\section{Proof of Theorem~\ref{main}}\label{sec:proof_main}
 
In this section, we always assume that $N$, $l$, $p$ and $r$ are fixed and are as in Theorem~\ref{main}.  Note that $N\ge2$. Moreover, we  need to fix a number $a$ in $]0,1[$ as follows. If $pl<N$ and $r<\infty$, we take $a\in]0,1[$ such that $ar>lq^*$; if $pl=N$ or $r=\infty$, $a$  is any  number in the interval $]0,1[$.

We begin with some preliminaries. Let $f\in W^{l,p}(\mathbb{R}^N)$ be a function with compact support, continuous in $\mathbb{R}^N\setminus \{0\}$, that does not change sign and such that $|f(x)|\to\infty$ as $x\to 0$. In addition, for reasons that will be clear later, in the case $l\geq 2$ we also require   that $f\in W^{1,pl}({\mathbb{R}^N})$. In the case $pl<N$ and $r< \infty$, we also require the extra condition  $f\notin L^q(\mathbb{R}^N)$ for any $q\in[ar,\infty]$. The existence of such functions is well-known, see~\cite{burenkov}*{Example~8, p.~32}. 

Let $\{x_n\}_{n\in\mathbb{N}}$ be a numerable dense set in $\mathbb{R}^N$. Let $u$ be the real valued function defined in $\mathbb{R}^N$ by
\begin{equation}\label{evansfunc}
u(x)=\sum_{n=1}^{\infty}\frac{1}{2^n} f(x-x_n),
\end{equation}
for  $x\in\mathbb{R}^N$. It is an exercise to prove that the series in~\eqref{evansfunc} is convergent in $W^{l,p}(\mathbb{R}^N)$, as well as in $W^{1,pl}(\mathbb{R}^N)$ if $l\geq 2$. Moreover, such series also converges almost everywhere in $\mathbb{R}^N$. Since $f$ does not change sign, by \eqref{evansfunc} it follows that  
\begin{equation}\label{exp}
|u(x)|\ge\frac{1}{2^n}|f(x-x_n)|,
\end{equation}
for all $n\in \mathbb{N}$ almost everywhere on $\mathbb{R}^N$. Thus $u$ is nowhere bounded in $\mathbb{R}^N$ and belongs to $W^{l,p}(\mathbb{R}^N)$, and also to $W^{1,pl}(\mathbb{R}^N)$ if $l\geq 2$. Moreover, in the case $pl<N$, $u$ is also  nowhere $L^{ar}$ in $\mathbb{R}^N$.

Let $(a_n)_{n\in\mathbb{N}}\subset]a,1[$ be a strictly decreasing sequence. Let us now define a sequence of real valued functions $\{q_n\}_{n\in\mathbb{N}}$ on $\mathbb{R}$ by setting $q_n(t)=|t|^{a_n}$ for all $t\in \mathbb{R}$, $n\in\mathbb{N}$. Note that, for all $n,m\in\mathbb{N}$,
\begin{equation}
\left|\frac{d^m}{dt^m}q_n(t)\right| \le m!\, |t|^{a_n-m}, \quad \forall t\ne 0.
\end{equation}

Let $\psi\in C^\infty(\mathbb{R})$ be fixed in such a way that $\psi(t)=0$ for all $|t|\le 1$ and $\psi(t)=1$ for all $|t|\geq 2$. We set $Q_n=\psi q_n$ for all $n\in\mathbb{N}$. Clearly $Q_n\in C^\infty(\mathbb{R})$ and by the Leibniz rule it easily follows that, for every $m\in\mathbb{N}$, there exists $k_m>0$ independent of $n$ such that 
\begin{equation*}
\left|\frac{d^m}{dt^m}Q_n(t)\right|\le k_m,
\end{equation*} 
for all $t\in\mathbb{R}$, $n\in\mathbb{N}$.

Thus, for any sequence $c=(c_n)_{n\in\mathbb{N}}\in\ell^1(\mathbb{N})$, the function defined by
\begin{equation*}
g_c(t)=\sum_{n=1}^{\infty}c_nQ_n(t), \quad t\in\mathbb{R},
\end{equation*} 
belongs to $C^\infty(\mathbb{R})$ and for every $m\in\mathbb{N}$ we have
\begin{equation*}
\left|\frac{d^m}{dt^m}g_c(t)\right|=\left|\sum_{n=1}^\infty c_n\frac{d^m}{dt^m}Q_n(t)\right|\le k_m\|c\|_{\ell^1(\mathbb{N})},
\end{equation*}
for all $t\in\mathbb{R}$. Moreover, for any $c\in\ell^1(\mathbb{N})\setminus\{0\}$, we have that
\begin{equation}\label{eq:explosion_g_c}
|g_c(t)|=\left|\sum_{n\ge \bar{n}}c_n|t|^{a_n}\right|=|t|^{a_{\bar{n}}}\left|\sum_{n\ge \bar{n}}c_n|t|^{a_n-a_{\bar{n}}}\right|\ge \frac{|c_{\bar{n}}|}{2}|t|^a,
\end{equation}
for any $|t|$ sufficiently big, where $\bar{n}=\min\{n\in\mathbb{N} : c_n\ne 0\}$.

 We can now prove the following result, where $W^{l,p}(\mathbb{R}^N)\cap W^{1,pl}(\mathbb{R}^N)$ is endowed with the usual norm obtained by summing the norms of $ W^{l,p}(\mathbb{R}^N)$ and $W^{1,pl}(\mathbb{R}^N)$. We set $bc=(b_nc_n)_{n\in\mathbb{N}}\in\ell^1(\mathbb{N})$ for all $b,c\in\ell^2(\mathbb{N})$ and we note that $\|bc\|_{\ell^1(\mathbb{N})}\le\|b\|_{\ell^2(\mathbb{N})} \|c\|_{\ell^2(\mathbb{N})}$. 

\begin{lem}\label{lemma:operator_T_u} Let $u$ be defined  as in~\eqref{evansfunc} and let $b=(b_n)_{n\in\mathbb{N}}\in\ell^2(\mathbb{N})$ be fixed with $b_n\ne 0$ for all $n\in\mathbb{N}$. 
The linear operator ${\mathcal{T}}_u$ from $\ell^2(\mathbb{N})$ to $W^{l,p}(\mathbb{R}^N)\cap W^{1,pl}(\mathbb{R}^N)$ defined by 
\begin{align*}
{\mathcal{T}}_u(c)=g_{bc}\circ u =\left(\sum_{n=1}^{\infty }b_nc_nQ_n\right)\circ u,
\end{align*}
for all $c=(c_n)_{n\in\mathbb{N}}\in\ell^2(\mathbb{N})$,
is continuous, injective and compact. Moreover, ${\mathcal{T}}_u(c)$ is a nowhere bounded function in $\mathbb{R}^N$ for all $c\in\ell^2(\mathbb{N})\setminus\{0\}$ and, in the case $pl<N$, ${\mathcal{T}}_u(c)$ is a  nowhere $L^r$ function in $\mathbb{R}^N$ for all $c\in\ell^2(\mathbb{N})\setminus\{0\}$.
\end{lem}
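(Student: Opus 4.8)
The plan is to verify each of the five claimed properties of $\mathcal{T}_u$ in turn, leveraging the uniform bounds on the derivatives of $Q_n$ established above.

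First I would check **well-definedness and continuity**. The key point is that $g_{bc}$ is a smooth function whose derivatives $\frac{d^m}{dt^m}g_{bc}$ are uniformly bounded by $k_m\|bc\|_{\ell^1}\le k_m\|b\|_{\ell^2}\|c\|_{\ell^2}$, using the submultiplicativity noted just before the lemma. To control $\|g_{bc}\circ u\|_{W^{l,p}\cap W^{1,pl}}$ I would differentiate the composition via the Faà di Bruno / chain rule: every derivative $D^\alpha(g_{bc}\circ u)$ with $|\alpha|\le l$ is a finite sum of terms of the form $(g_{bc}^{(m)}\circ u)\prod_j D^{\beta_j}u$, where the $\beta_j$ partition $\alpha$. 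Each $g_{bc}^{(m)}\circ u$ is bounded by $k_m\|b\|_{\ell^2}\|c\|_{\ell^2}$, so the $L^p$ (resp. $L^{pl}$) norm of each such term is controlled by a product of $L^{p_j}$-norms of derivatives of $u$, which are finite since $u\in W^{l,p}\cap W^{1,pl}$; here the requirement $f\in W^{1,pl}$ for $l\ge2$ and the choice of exponents via Hölder is exactly what makes the top-order terms integrable. This yields a bound $\|\mathcal{T}_u(c)\|\le C\|c\|_{\ell^2}$, giving linearity and continuity simultaneously.

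Next, for **injectivity and compactness**, I would argue as follows. Since $\mathcal{T}_u$ is linear, injectivity amounts to showing $\mathcal{T}_u(c)=0$ forces $c=0$; but $g_{bc}\circ u=0$ together with the explosion estimate~\eqref{eq:explosion_g_c} and the fact that $|u|\to\infty$ near each $x_n$ forces every $b_nc_n$, hence every $c_n$, to vanish. For compactness I expect the decisive idea to be that the embedding factors through a more regular space: the coefficient multiplication $c\mapsto bc$ from $\ell^2$ to $\ell^1$ is compact because $b\in\ell^2$ (it is a diagonal operator with entries tending to $0$), and composing with the bounded map $\ell^1\to W^{l,p}\cap W^{1,pl}$, $d\mapsto g_d\circ u$, yields a compact operator as the composition of a compact and a bounded operator. \emph{This compactness step is the part I expect to be the main obstacle}, since one must verify that $d\mapsto g_d\circ u$ is genuinely bounded from $\ell^1$ (not merely from $\ell^2$) into the target Sobolev space, which again rests on the uniform derivative bounds $k_m$.

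Finally, for the **pathology of the image**, fix $c\in\ell^2\setminus\{0\}$ and let $\bar n=\min\{n:c_n\ne0\}$. On any nonempty open $U\subset\mathbb{R}^N$, density of $\{x_n\}$ lets me find points where $|u|$ is arbitrarily large, and on the region where $|u|$ is large the estimate~\eqref{eq:explosion_g_c} gives $|\mathcal{T}_u(c)|\ge\frac{|b_{\bar n}c_{\bar n}|}{2}|u|^a$ pointwise. Combining this with~\eqref{exp}, which controls $|u|$ from below by $2^{-n}|f(\cdot-x_n)|$, and with the blow-up $|f(x)|\to\infty$ as $x\to0$, shows that $\mathcal{T}_u(c)$ is unbounded on every open set, i.e. nowhere bounded. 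For the nowhere-$L^r$ claim when $pl<N$, I would instead estimate $\int_U|\mathcal{T}_u(c)|^r\ge(\tfrac{|b_{\bar n}c_{\bar n}|}{2})^r\int_{U'}|u|^{ar}$ on a suitable subset $U'$ where $|u|$ is large, and then invoke that $u$ is nowhere $L^{ar}$ (established just before the lemma, using $f\notin L^q$ for $q\in[ar,\infty]$ and the choice $ar>lq^*$) to conclude the integral diverges.
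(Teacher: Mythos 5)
Your argument has the same skeleton as the paper's proof in three of the four steps. The pathology of the image and injectivity are obtained exactly as in the paper: combine \eqref{exp} with \eqref{eq:explosion_g_c} to bound $|{\mathcal{T}}_u(c)|$ from below by a multiple of $|u|^a$ where $|u|$ is large near each $x_n$, then use that $u$ is nowhere bounded (and nowhere $L^{ar}$ with $ar>lq^*$ when $pl<N$); this is the paper's estimate \eqref{belowest}. Your compactness argument is a repackaging rather than a new route: the factorization ${\mathcal{T}}_u=S\circ M_b$, with $M_b\colon\ell^2(\mathbb{N})\to\ell^1(\mathbb{N})$, $c\mapsto bc$, compact and $S\colon\ell^1(\mathbb{N})\to W^{l,p}(\mathbb{R}^N)\cap W^{1,pl}(\mathbb{R}^N)$, $d\mapsto g_d\circ u$, bounded, is correct, and it rests on precisely the same tail estimate $\bigl(\sum_{n>k}b_n^2\bigr)^{1/2}\to0$ that the paper uses to approximate ${\mathcal{T}}_u$ in operator norm by the finite-rank truncations ${\mathcal{T}}_u^{(k)}$. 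Note, however, that the boundedness of $S$ on $\ell^1$ is exactly the composition estimate discussed next, so the factorization does not let you avoid it.

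The one genuine gap is in your continuity step when $l\ge2$. After Fa\`a di Bruno you claim that the $L^p$ (resp.\ $L^{pl}$) norm of each term $(g_{bc}^{(m)}\circ u)\prod_jD^{\beta_j}u$ is controlled by a product of $L^{p_j}$-norms of derivatives of $u$, ``which are finite since $u\in W^{l,p}\cap W^{1,pl}$''. This is where the lemma actually lives, and membership in the intersection space does not by itself provide the required integrability: you know only that $D^\beta u\in L^p$ for $|\beta|\le l$ and $Du\in L^{pl}$, whereas the H\"older bookkeeping for a product with several factors needs the intermediate bounds $D^ku\in L^{pl/k}$ for $2\le k\le l-1$. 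These follow from Gagliardo--Nirenberg interpolation between orders one and $l$, namely $\|D^ku\|_{L^{pl/k}}\le C\|Du\|_{L^{pl}}^{1-\theta}\|D^lu\|_{L^p}^{\theta}$ with $\theta=(k-1)/(l-1)$; moreover, for the terms of total order $|\alpha|<l$ with at least two factors you must additionally interpolate exponents, since $L^{pl/|\alpha|}(\mathbb{R}^N)$ is not contained in $L^p(\mathbb{R}^N)$ on the unbounded domain. The paper deliberately sidesteps all of this by importing estimate~(2) of~\cite{runst&sickel96}*{\S5.2.5}, which is exactly \eqref{wingfriedest}, and it flags the Dahlberg degeneracy phenomenon as the reason why any proof must use the $W^{1,pl}$ norm of $u$ quantitatively: for $1+1/p<l<N/p$ no nontrivial composition operator preserves $W^{l,p}$, so an estimate relying on $u\in W^{l,p}$ alone is impossible. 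You correctly identified $f\in W^{1,pl}$ as the safeguard, so your gap is fixable along the lines above, but as written the decisive estimate is asserted rather than proved.
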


\begin{proof} 
Since $g_{bc}\in C^\infty(\mathbb{R})$ has bounded derivatives, $g_{bc}(0)=0$, $u\in W^{l,p}(\mathbb{R}^N)$ if $l\geq 1$ and $u\in W^{l,p}(\mathbb{R}^N)\cap W^{1,pl}(\mathbb{R}^N)$ if $l\geq 2$, the function ${\mathcal{T}}_u(c)$ belongs to $W^{l,p}(\mathbb{R}^N)\cap W^{1,pl}(\mathbb{R}^N)$. Indeed, the case $l=1$ is a direct application of the chain rule, which also allows to easily prove that 
\begin{equation}\label{chain}
\|{\mathcal{T}}_u(c)\|_{W^{1,p}(\mathbb{R}^N)}\leq k_1\|b\|_{\ell^2(\mathbb{N})} \|c\|_{\ell^2(\mathbb{N})} \|u\|_{W^{1,p}(\mathbb{R}^N)}.
\end{equation}
In the case $l\geq 2$, one needs to use the condition $u\in W^{l,p}(\mathbb{R}^N)\cap W^{1,pl}(\mathbb{R}^N)$ in a substantial way, in order to avoid the possible appearance of the so called \emph{Dahlberg degeneracy phenomenon}, which prevents non trivial composition operators to preserve Sobolev spaces $W^{l,p}$ when $1+1/p<l<N/p$ (including those with fractional order of smoothness). For further details and discussions, we refer to~\cite{runst&sickel96}*{\S5.2.5} and~\cite{dahlberg}. In particular, by estimate~(2) in~\cite{runst&sickel96}*{\S5.2.5}, we can also immediately deduce that there exists a constant $K>0$, independent of $u$, $b$, and $c$, such that 
\begin{equation}\label{wingfriedest}
\|{\mathcal{T}}_u(c)\|_{W^{l,p}(\mathbb{R}^N)}\leq K\max_{i=1,\dots , l} k_i\ \|b\|_{\ell^2(\mathbb{N})} \|c\|_{\ell^2(\mathbb{N})} \left( \|u\|_{W^{l,p}(\mathbb{R}^N)}+ \|u\|^l_{W^{1,pl}(\mathbb{R}^N)}\right).
\end{equation}
 When $l\ge2$, similarly to~\eqref{chain}, we also have
\begin{equation}\label{chain_pl}
\|{\mathcal{T}}_u(c)\|_{W^{1,pl}(\mathbb{R}^N)}\leq k_1\|b\|_{\ell^2(\mathbb{N})} \|c\|_{\ell^2(\mathbb{N})} \|u\|_{W^{1,pl}(\mathbb{R}^N)}.
\end{equation}
By~\eqref{chain}, \eqref{wingfriedest} and~\eqref{chain_pl}, we can conclude that the operator ${\mathcal{T}}_u$ is well defined and continuous.

By~\eqref{exp} combined with~\eqref{eq:explosion_g_c}, we have  that for any $c\in \ell^2(\mathbb{N})\setminus \{0\}$ and $n\in\mathbb{N}$ there exists an open neighbourhood $U_n$
of $x_n$ and $\alpha_n >0$ such that 
\begin{equation}\label{belowest}
|{\mathcal{T}}_u(c)|\geq \alpha_n  |u|^a \text{ on } U_n.
\end{equation}
 By~\eqref{belowest}, it follows that ${\mathcal{T}}_u(c)$ is nowhere bounded in $\mathbb{R}^N$  and, if $pl<N$, ${\mathcal{T}}_u(c)$ is also nowhere $L^r$ in $\mathbb{R}^N$ for all $c\in\ell^2(\mathbb{N})\setminus\{0\}$. In particular, the operator ${\mathcal{T}}_u$ is injective.

It remains to prove that ${\mathcal{T}}_u$ is compact. For any $k\in\mathbb{N}$, consider the linear continuous operator $\mathcal{T}^{(k)}_u$ from $\ell^2(\mathbb{N})$ to $W^{l,p}(\mathbb{R}^N)\cap W^{1,pl}(\mathbb{R}^N)$ defined by 
\begin{equation*}
{\mathcal{T}}^{(k)}_u(c)=\left(\sum_{n=1}^k b_nc_n Q_n\right)\circ u,
\end{equation*}
for all $c\in\ell^2(\mathbb{N})$. Note that ${\mathcal{T}}_u^{(k)}$ is a finite rank operator. Then, by estimates analogous to~\eqref{chain}, \eqref{wingfriedest} and~\eqref{chain_pl} and by the Cauchy--Schwarz inequality, we get
\begin{equation*}
\|{\mathcal{T}}_u(c)-{\mathcal{T}}_u^{(k)}(c)\|_{W^{l,p}(\mathbb{R}^N)\cap W^{1,pl}(\mathbb{R}^N)}\leq B\left(\sum_{n=k+1}^{+\infty}b_n^2\right)^{\frac{1}{2}}\| c \|_{\ell^2(\mathbb{N})},
\end{equation*}
for all $c\in \ell^2(\mathbb{N})$,
where $B>0$ is a constant independent of $c$. Hence
\begin{equation*}
\lim_{k\to+\infty}\|{\mathcal{T}}_u-{\mathcal{T}}_u^{(k)}\|_{\ell^2(\mathbb{N})\to W^{l,p}(\mathbb{R}^N)\cap W^{1,pl}(\mathbb{R}^N)}=0,
\end{equation*}
where in the left-hand side we use the standard operator norm of ${\mathcal{T}}_u-{\mathcal{T}}_u^{(k)}$. Thus ${\mathcal{T}}_u$ is approximated in norm by finite rank operators, hence it is compact. This concludes the proof of Lemma~\ref{lemma:operator_T_u}.
\end{proof}

We can now prove our main result.

\begin{proof}[Proof of Theorem~\ref{main}]
Let $\bar{x}\in\Omega$ and let $R>0$ be such that $B(\bar{x},R)\subset\Omega $. Let $A$ be the open annulus defined by $A=\{x\in \mathbb{R}^N: R/2<|x-\bar x|<R\}$. We denote by $W^{l,p}_0(A)$ the standard Sobolev space defined as the closure of $C^{\infty}_c(A)$ in $W^{l,p}(A)$. We set
\begin{equation}\label{ics}
X=\{v\in W^{l,p}_0(A):\ v\text{ is radial with respect to } \bar{x}\},
\end{equation}
where it is meant that a function $v$ is radial with respect to $\bar{x}$ if the value of $v(x)$ depends only on $|x-\bar{x}|$ for all $x\in A$. The space $X$ can be naturally seen as a subspace of $W^{l,p}(\Omega)$ by extending functions by zero outside $A$. Moreover, it is straightforward that $X$ is a closed subspace of $W^{l,p}(\Omega)$ with infinite dimension. 

Since any function $v$ in $X$ is of the form $v(x)= g_v(|x-\bar x|)$ for a suitable function $g_v$ and is zero outside $A$, by the Radial Lemma (see~\cite{lions82}*{Lemma II.1}) it is easy to see that $X$ is continuously embedded into the space $C_b(R/2,R)$ via the embedding $v\mapsto g_v$. In particular, any function in $X$ is bounded and continuous.  Since $C_b(R/2,R)$ is continuously embedded into $L^2(R/2,R)$, which is a Hilbert space isometric to $\ell^2(\mathbb{N})$, we conclude that there exists a continuous embedding ${\mathcal{J}}$ of $X$ into $\ell^2(\mathbb{N})$. 

Let ${\mathcal{R}}$ be the restriction operator from $W^{l,p}(\mathbb{R}^N)$ to $W^{l,p}(\Omega)$ and let ${\mathcal{T}}_u$ be the operator defined in Lemma~\ref{lemma:operator_T_u}  considered as an operator from $\ell^2(\mathbb{N})$ to $W^{l,p}(\mathbb{R}^N)$. It is obvious that the operator ${\mathcal{R}}\circ {\mathcal{T}}_u\circ \mathcal{J}$ is injective because all the non zero functions in the image of the operator ${\mathcal{T}}_u\circ {\mathcal{J}}$ are nowhere bounded. Thus ${\mathcal{R}}\circ {\mathcal{T}}_u\circ {\mathcal{J}}$ is a compact embedding of $X$ into $W^{l,p}(\Omega)$. We now consider the operator ${\mathcal{I}}-{\mathcal{R}}\circ {\mathcal{T}}_u\circ {\mathcal{J}}$ from $X$ to $W^{l,p}(\Omega)$, where ${\mathcal{I}}$ denotes the identity operator of $W^{l,p}(\Omega)$. 

We note that $\ker({\mathcal{I}}-{\mathcal{R}}\circ {\mathcal{T}}_u\circ {\mathcal{J}})=\{0 \}$. Indeed, if $v\in X$ is such that $v={\mathcal{R}}\circ {\mathcal{T}}_u\circ {\mathcal{J}}(v)$, then ${\mathcal{R}}\circ {\mathcal{T}}_u\circ {\mathcal{J}}(v)$ is a bounded continuous function; but this implies that ${\mathcal{J}}(v)=0$, hence $v=0$ (in fact, otherwise, if $v\ne 0$ then ${\mathcal{R}}\circ {\mathcal{T}}_u\circ {\mathcal{J}}(v)$ would be nowhere bounded).

Let $Y$ be the subspace of $W^{l,p}(\Omega)$ defined by 
\begin{equation}\label{ipsilon}
Y=({\mathcal{I}}-{\mathcal{R}}\circ {\mathcal{T}}_u\circ\mathcal{J})(X).
\end{equation}
Since ${\mathcal{I}}-{\mathcal{R}}\circ {\mathcal{T}}_u\circ\mathcal{J}$ is a compact perturbation of the identity, by the Fredholm Alternative Theorem (see, e.g.,~\cite{brezis}*{Exercise~6.9, (4)})  it follows that $Y$ is a closed subspace of $W^{l,p}(\Omega)$. In particular, since ${\mathcal{I}}-{\mathcal{R}}\circ {\mathcal{T}}_u\circ\mathcal{J}$ is injective, $\dim Y =\dim X=\infty$. 

 Finally, we observe that any function in $ Y\setminus\{0\}$ is nowhere bounded. Indeed, as observed above, any function $v\in X$ is bounded, while ${\mathcal{R}}\circ {\mathcal{T}}_u\circ\mathcal{J}(v)$ is nowhere bounded if $v\ne 0$. Hence $({\mathcal{I}}-{\mathcal{R}}\circ {\mathcal{T}}_u\circ\mathcal{J})(v)$ is nowhere bounded for all $v\in X\setminus\{0\}$. Moreover, in the case $pl<N$, $({\mathcal{I}}-\mathcal{R}\circ {\mathcal{T}}_u\circ\mathcal{J})(v)$ is nowhere $L^r$ for all $v\in X\setminus\{0\}$, because $\mathcal{R}\circ\mathcal{T}_u\circ\mathcal{J}(v)$ is nowhere $L^r$ for all $v\in X$. This concludes the proof.
\end{proof}

\begin{rem}\label{fisiorem} Assume that $l\geq 2$. In the proof of Theorem~\ref{main} one can replace the space $W^{l,p}$ by $W^{l,p}\cap W^{1,pl}$. Indeed, a simple argument allows to see that the space $X$ in \eqref{ics} is a closed subspace also of $W^{l,p}(\Omega)\cap W^{1,pl}(\Omega )$. Thus, by Lemma~\ref{lemma:operator_T_u}, one can consider 
the operator  ${\mathcal{T}}$ in the proof of Theorem~\ref{main} as an operator from $X$ (considered as a closed subspace of $W^{l,p}(\Omega)\cap W^{1,pl}(\Omega )$) to $W^{l,p}(\Omega)\cap W^{1,pl}(\Omega )$, and conclude that the space $Y$ in \eqref{ipsilon} is also  a closed subspace also of $W^{l,p}(\Omega)\cap W^{1,pl}(\Omega )$.
\end{rem}

\begin{rem}
We briefly observe that the subcritical Sobolev space $W^{l,p}(\Omega)$ contains also a $\mathfrak{c}$-dimensional linear subspace and a countably generated algebra whose non zero elements are nowhere bounded functions.

The existence of such a $\mathfrak{c}$-dimensional linear subspace follows immediately from the fact that any infinite dimensional Banach space has Hamel dimension at least~$\mathfrak{c}$, see~\cite{lacey73}. Thus, in Theorem~\ref{main}, it is possible to replace `closed infinite dimensional linear subspace' with `$\mathfrak{c}$-dimensional linear subspace'. For a simpler construction, consider the vector space generated by the functions $h_\mu\circ u$, where $h_\mu$ is a real valued function defined on $\mathbb{R}$ such that $h_\mu\in C^l(\mathbb{R})$, $h_\mu(0)=0$ and $h_\mu(t)=|t|^\mu$ for $|t|\ge 1$ for any $\mu\in[a,1]$, where $u$ is the function defined in~\eqref{evansfunc} and $a\in]0,1[$ is chosen as at the beginning of Section~\ref{sec:proof_main}.  

To prove the existence of a countably generated algebra of nowhere bounded functions, it is enough to consider the algebra generated by the functions $l_n\circ u$, where $l_n$ is a real valued function defined on $\mathbb{R}$ such that $l_n\in C^l(\mathbb{R})$, $l_n(0)=0$ and $l_n(t)=\log(1+l_{n-1}(t))$, $l_0(t)=t$, for $|t|\ge 1$ and $n\in\mathbb{N}$. Therefore, in the case $pl\le N$ for $p\ne 1$, or $pl<N$ for $p=1$, $W^{l,p}(\Omega)$ contains a countably generated algebra whose non zero elements are nowhere bounded functions. 
\end{rem}


\begin{bibdiv}
\begin{biblist}

\bib{aronetal15}{book}{
  title                    = {Lineability: The Search for Linearity in Mathematics},
  author                   = {Aron, R. M.}, 
  author                   = {Bernal-Gonz\'{a}lez, L.}, 
  author                   = {Pellegrino, D. M.},
  author                   = {Seoane-Sep\'{u}lveda, J. B.},
  series                   = {Monographs and Research Notes in Mathematics},
  year                     = {2015},
  publisher                = {CRC Press},
  volume                   = {14}
}

\bib{aronetal10}{article}{
  title                    = {{Uncountably Generated Algebras of Everywhere Surjective Functions}},
  author                   = {Aron, Richard M.}, 
  author                   = {Conejero, Jos\'{e} A.}, 
  author                   = {Peris, Alfredo},
  author                   = {Seoane-Sep\'{u}lveda, Juan B.},
  journal                  = {Bull. Belg. Math. Soc. Simon Stevin},
  year                     = {2010},
  number                   = {3},
  pages                    = {571-575},
  volume                   = {17}
}

\bib{aronetal05}{article}{
  title                    = {{Lineability and spaceability of sets of functions on {$\mathbb{R}$}}},
  author                   = {Aron, Richard M.},  
  author                   = {Gurariy, Vladimir I.},
  author                   = {Seoane-Sep\'{u}lveda, Juan B.},
  journal                  = {Proc. Amer. Math. Soc.},
  year                     = {2004},
  number                   = {3},
  pages                    = {795-803},
  volume                   = {133}
}

\bib{brezis}{book}{
  title                    = {{Functional analysis, Sobolev spaces and partial differential equations}},
  author                   = {Brezis, Haim},
  publisher                = {Springer, New York},
  year                     = {2011},
}

\bib{burenkov}{book}{
  title                    = {{Sobolev spaces on domains}},
  author                   = {Burenkov, Victor I.},
  publisher                = {B. G. Teubner Verlagsgesellschaft mbH, Stuttgart},
  year                     = {1998},
  volume                   = {137}
}

\bib{dahlberg}{article}{
  title                    = {{A note on Sobolev spaces. Harmonic analysis in Euclidean spaces (Proc. Sympos. Pure Math., Williams Coll., Williamstown, Mass., 1978) }},
  author                   = {Dahlberg, Bj\"{o}rn E. J},
  journal                  = {Proc. Sympos. Pure Math., XXXV, Part, Amer. Math. Soc., Providence, R.I., Part 1},
  year                     = {1979},
  pages                    = {183-185}
}

\bib{dragoetal11}{article}{
  title                    = {{A ``bouquet'' of discontinuous functions for beginners in mathematical analysis}},
  author                   = {Drago, Giacomo},
  author                   = {Lamberti, Pier Domenico},
  author                   = {Toni, Paolo},
  journal                  = {Americ. Math. Monthly},
  year                     = {2011},
  number                   = {9},
  pages                    = {799-811},
  volume                   = {118}
}

\bib{enfloetal14}{article}{
  title                    = {{Some results and open questions on spaceability in function spaces}},
  author                   = {Enflo, Per H.},
  author                   = {Gurariy, Vladimir I.},
  author                   = {Seoane-Sep\'{u}lveda, Juan B.},
  journal                  = {Trans. Amer. Math. Soc.},
  year                     = {2014},
  number                   = {2},
  pages                    = {611-625},
  volume                   = {366}
}

\bib{evans98}{book}{
  title                    = {{Partial Differential Equations}},
  author                   = {Evans, Lawrence C.},
  publisher                = {{Amer. Math. Soc.}},
  year                     = {1998},
  edition                  = {first ed.},
  series                   = {Graduate Studies in Mathematics},
  volume                   = {19}
}

\bib{gurariy91}{article}{
  title                    = {{Linear spaces composed of everywhere non differentiable functions (Russian)}},
  author                   = {Gurariy, Vladimir I.},
  journal                  = {C. R. Acad. Bulgare Sci.},
  year                     = {1991},
  number                   = {5},
  pages                    = {13-16},
  volume                   = {44}
}

\bib{helffer}{book}{
  title                    = {{Spectral Theory and its Applications}},
  author                   = {Helffer, Bernard},
  publisher                = {{Cambridge University Press}},
  year                     = {2013},
  series                   = {Cambridge Studies in Advanced Mathematics},
}

\bib{kaufmann&pellegrini11}{article}{
  title                    = {{Spaceability of sets of nowhere $L^q$ functions}},
  author                   = {Kaufmann, Pedro L.},
  author                   = {Pellegrini, Leonardo},
  eprint                   = {http://arxiv.org/abs/1110.5774},
  year                     = {2011}
}

\bib{lacey73}{article}{
  title                    = {The Hamel dimension of any infinite dimensional separable Banach space is~$\mathfrak{c}$},
  author                   = {Lacey, Elton H.},
  journal                  = {Amer. Math. Monthly},
  year                     = {1973},
  pages                    = {298},
  volume                   = {80}

}

\bib{lions82}{article}{
  title                    = {{Sym\'{e}trie et compacit\'{e} dans les espaces de Sobolev}},
  author                   = {Lions, P. L.},
  journal                  = {J. Funct. Anal.},
  pages                    = {315-334},
  volume                   = {49},
  year                     = {1982}
}

\bib{necas}{book}{
  title                    = {{Direct Methods in the Theory of Elliptic Equations}},
  author                   = {Ne\v{c}as, Jindrich},
  publisher                = {Springer},
  year                     = {2012},
  series                   = {Springer Monographs in Mathematics}
}

\bib{runst&sickel96}{book}{
  title                    = {{Sobolev Spaces of Fractional Order, Nemytskij Operators, and Nonlinear Partial Differential Equations}},
  author                   = {Runst, Thomas},
  author                   = {Sickel, Winfried},
  publisher                = {W. de Gruyter},
  year                     = {1996},
  series                   = {De Gruyter series in nonlinear analysis and applications}
}

\bib{seoane06}{thesis}{
  title                    = {{Chaos and lineability of pathological phenomena in Analysis}},
  author                   = {Seoane-Sep\'{u}lveda, Juan Benigno},
  school                   = {Kent State University},
  year                     = {2006},
  type                     = {Ph.D. Thesis} 
}

\end{biblist}
\end{bibdiv}

\end{document}